\def\classification#1{\def\@class{#1}}
\DeclareFontFamily{OT1}{rsfs}{}
\DeclareFontShape{OT1}{rsfs}{n}{it}{<-> rsfs10}{}
\DeclareMathAlphabet{\mathscr}{OT1}{rsfs}{n}{it}
\newcommand{\eps}{\varepsilon}
\newcommand{\vers}{\rightarrow}
\DeclareMathOperator{\mo}{\,mod}
\newcommand{\Int}{{\mathbb Z}}
\newcommand{\Real}{{\mathbb R}}
\newcommand{\Com}{{\mathbb C}}
\newtheorem{prop}{Proposition}[section]
\newtheorem{thm}[prop]{Theorem}
\newtheorem{lem}[prop]{Lemma}
\numberwithin{equation}{section}
\title{Improving Roth's theorem in the primes}
\author{Harald Andr\'es Helfgott}
\address{H. A. Helfgott, Department of Mathematics, University of Bristol,
  Bristol BS8 1TW, United Kingdom}
\email{h.andres.helfgott@bristol.ac.uk}
\author{Anne de Roton}
\address{A.\ de Roton, Institut Elie Cartan, UMR 7502, Nancy-Universit\'e,
CNRS, INRIA, B.P. 239, 54506 Vandouevre-l\`es-Nancy cedex, France;
A.\ de Roton,
Pacific Institute for the Mathematical Sciences (CNRS UMI PIMS)
University of British Comlumbia
Vancouver BC V6T 1Z2
Canada}
\email{deroton@iecn.u-nancy.fr}
\begin{document}
\begin{abstract}
Let $A$ be a subset of the primes. Let $$\delta_P(N) = 
\frac{|\{n\in A: n\leq N\}|}{|\{\text{$n$ prime}: n\leq N\}|}.$$
We prove that, if \[\delta_P(N)\geq
C\ 
\frac{\log \log \log N}{(\log \log N)^{1/3}}\] for $N\geq N_0$, where
$C$ and $N_0$ are absolute constants, then $A\cap \lbrack 1,N\rbrack$ contains
a non-trivial three-term arithmetic progression.

This improves on Green's result \cite{Gr}, which
needs \[\delta_P(N) \geq C'
\sqrt{\frac{\log \log \log \log \log N}{\log \log \log \log N}}.\]
\end{abstract}
\maketitle
\section{Introduction}
\subsection{History and statement}
In 1953, K. Roth \cite{Ro}
proved that any subset of positive integers of positive 
density contains infinitely many non trivial three-term arithmetic progressions.
More precisely, his result is as follows. Given a set $A\subset \mathbb{Z}^+$, 
we define the {\em density} of $A\cap \lbrack 1, N\rbrack$ by
$\delta(N) =
\frac1N{|\{n\in A:\; n\leq N\}|}$.
(We write $|S|$ for the number of elements of a set $S$.)
 Roth proved that,
given any set of integers
$A\subset \mathbb{Z}^+$ such that $\delta(N) \geq {C}/{\log \log N}$
for some $N\geq N_0$ (where $C$ and $N_0$ are absolute constants) there
must be at least one non-trivial
three-term arithmetic progression in $A\cap \lbrack 1, N
\rbrack$. (By a {\em non-trivial} arithmetic progression we mean one
with non-zero modulus, i.e., $(a, a+d, a+2d)$ with $d\ne 0$.)

Much later, Heath-Brown \cite{HB} (1987) and Szemer\'edi \cite{Sz} (1990)
 improved this result by
showing that it is enough to require that $\delta(N) \geq C (\log{N})^{-c}$ for some small positive $c$.
By considering Bohr sets where previous arguments had used arithmetic progressions, Bourgain relaxed
the condition to  $\delta(N) \geq C \sqrt{{\log\log{N}}/{\log{N}}}$ in 
\cite{Bo2} (1999)
and to $\delta(N) \geq C {(\log\log{N})^2}{(\log{N})^{-2/3}}$ in 
\cite{Bo3} (2006).


Van der Corput proved \cite{vdC} that the primes contain infinitely many non trivial
3-term arithmetic progressions. The question then arises -- is Roth's theorem
true in the primes? That is -- must a subset of primes of positive relative
density\footnote{Given a subset $A$ of the set $P$ of all primes, we define the relative density
$\delta_P(N)$ of $A$ to be $\delta_P(N) = 
{|\{n\in A: n\leq N\}|}/{|\{\text{$n$ prime}: n\leq N\}|}$.
We are asking whether, given $A\subset P$ such that $\delta_P(N)>\delta_0$
($\delta_0>0$) for some sufficiently large $N$, the set $A$ contains
a non-trivial 3-term arithmetic progression.}
 contain a non-trivial 3-term arithmetic progression? 

In \cite{Gr}, B. Green showed that the answer is ``yes''.
He proved that, given any subset $A$ of the primes such that $A\cap \lbrack 1,
N\rbrack$ has relative density 
$\delta_P(N)\geq C ({\log \log \log \log \log N }/{\log \log \log \log N})^{1/2}$ for some $N\geq N_0$, where $C$ and $N_0$ are absolute constants,
there exists a 3-term arithmetic progression in $A$.

We prove the following result.
\begin{thm}\label{thm:main}
Let $A$ be a subset of the primes. Assume that $A\cap \lbrack 1, N\rbrack$ 
is of relative density
\[\delta_P(N) \geq C\ 
\frac{\log \log \log N}{(\log \log N)^{1/3}}\]
for some $N\geq N_0$, where $C$ and $N_0$ are absolute
constants. Then
$A$ contains a non-trivial 3-term arithmetic progression.
\end{thm}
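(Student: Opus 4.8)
The plan is to run a transference argument: relate the number of three-term progressions in $A$ to the number in a bounded ``dense model'' on $\Z/N'\Z$, and then apply Bourgain's quantitative Roth theorem (the bound $\delta(N)\geq C(\log\log N)^2(\log N)^{-2/3}$ quoted above, from \cite{Bo3}) to that model. First I would perform the $W$-trick. Pick a parameter $w=w(N)\to\infty$ slowly, set $W=\prod_{p\leq w}p$, and by pigeonhole fix a residue $a$ with $\gcd(a,W)=1$ for which $A$ has relative density $\geq\delta$ among primes $\equiv a\pmod W$. Writing $N'\approx N/W$ and $n=a+Wm$, let $f\colon\Z/N'\Z\to\R_{\geq 0}$ be the indicator of $A$ along this progression, weighted by $\tfrac{\phi(W)}{W}\log n$, so that $\mathbb{E}f\gg\delta$, and let $\nu$ be the corresponding GPY/Selberg enveloping-sieve majorant of the primes in this progression, normalized so $\mathbb{E}\nu=1+o(1)$; then $0\leq f\leq\nu$ pointwise.

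The analytic engine is Green's restriction theorem for the primes: after the $W$-trick, every $h$ with $0\leq h\leq\nu$ satisfies $\|\widehat h\|_{\ell^q(\Z/N'\Z)}\ll_q 1$ for each $q>2$, with an implied constant independent of $N$ (essentially $(q-2)^{-O(1)}$); equivalently the large spectrum $\{\xi:|\widehat h(\xi)|\geq\eta\}$ has size $\ll_q\eta^{-q}$. Next I would invoke a dense-model theorem (Fourier mollification over a Bohr set whose dimension is governed by the target Fourier error) to produce $g\colon\Z/N'\Z\to[0,1]$ with $\mathbb{E}g=\mathbb{E}f$ and $\|\widehat f-\widehat g\|_{\ell^\infty}\leq\epsilon$, where $\epsilon$ is a free parameter. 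Splitting the progression-counting trilinear form along the large and small spectra and applying H\"older with the restriction bound yields
\[
\Lambda_3(f,f,f)\;\geq\;\Lambda_3(g,g,g)\;-\;O\!\bigl(\epsilon^{1-2/q}\bigr).
\]

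Since $\mathbb{E}g\gg\delta$ and $0\leq g\leq 1$, Bourgain's theorem in counting form gives $\Lambda_3(g,g,g)\geq\lambda(\delta)$ for an explicit $\lambda(\delta)>0$ --- of the shape $\exp(-C\delta^{-3/2}(\log 1/\delta)^{O(1)})$ --- provided $N'$ exceeds a bound of comparable size. Choosing $q$ close to $2$, $w$ as large as the sieve and the relation $N'\approx N/W$ allow, and $\epsilon$ small enough that $O(\epsilon^{1-2/q})<\tfrac12\lambda(\delta)$, one obtains $\Lambda_3(f,f,f)>\tfrac12\lambda(\delta)>0$; as the degenerate ($d=0$) contribution is only $O(\|\nu\|_\infty^2(N')^{-1})=o(1)$, a genuine progression survives, and it lifts to a nontrivial three-term progression in $A\cap[1,N]$ with common difference a nonzero multiple of $W$.

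The hard part will be the parameter balancing in the last step: shrinking the Fourier error $\epsilon$ is expensive --- the Bohr-set dense-model construction forces $N'$ to be doubly exponential in a power of $1/\epsilon$ --- while $\lambda(\delta)$ is already exponentially small in $\delta^{-3/2}$, and the restriction constant degrades as $q\to 2$. Tracking these three costs simultaneously, and in particular avoiding an extra exponential loss in the transference (this, together with the use of \cite{Bo3} in place of \cite{Bo2}, is where the improvement over \cite{Gr} comes from), is exactly what produces the threshold $\delta_P(N)\geq C\,\log\log\log N/(\log\log N)^{1/3}$.
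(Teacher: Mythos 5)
Your overall architecture --- $W$-trick, restriction estimate for the enveloping sieve, Bohr-set mollification, transference of the trilinear form, Bourgain's Roth bound, parameter balancing --- matches the paper's, and your transfer inequality and your identification of the Bohr-set dimension as the bottleneck are both essentially right. But there is a genuine gap at the center of the argument, and it sits precisely where the paper's actual contribution lies.

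You assert that the Bohr-set mollification produces a bounded dense model $g\colon \Int/N'\Int\to[0,1]$ with $\mathbb{E}g=\mathbb{E}f$ and $\|\hat f-\hat g\|_\infty\le\epsilon$. That assertion is unjustified and, as stated, false: the mollified function $a_1=a\ast\sigma$ satisfies the Fourier-closeness property, but it is \emph{not} pointwise $O(1/N')$. Indeed $a_1(n)\le\sup a=\log N/(N'\log z)$, and with the paper's choice $z=(\log N)/3$ this is larger than $1/N'$ by a factor of order $\log N/\log\log N$. Getting from the mollified object to a genuinely $[0,1]$-valued model is the expensive step: this is exactly what Green's original proof does, and the extra logarithms in his threshold are the price of doing so. Simply invoking ``a dense-model theorem'' does not avoid that price; it repackages it.

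The paper's route is different and is where the saving comes from. Rather than trying to bound $a_1$ pointwise, it proves the $\ell_2$ bound $|a_1|_2\ll 1/\sqrt{N'}$ (Section \ref{subs:qanpi}). This is done by majorizing $\tilde a$ by a sieve weight $\lambda$ supported on $\{n:b+nM\text{ prime}\}$, expanding $|\lambda\ast\tilde\sigma|_2^2$ as a double sum of autocorrelations, and controlling $\sum_n\lambda(n+m_1)\lambda(n+m_2)$ by Brun--Titchmarsh on the diagonal and by a twin-prime-type Selberg upper bound (Lemma \ref{lem:cogot}) off the diagonal; after the $W$-trick, the singular-series factor $\prod_{p>z,\,p\mid m_1-m_2}p/(p-1)$ is uniformly $O(1)$. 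With $|a_1|_1\gg\alpha$ and $|a_1|_2\ll 1/\sqrt{N'}$ in hand, a one-line Cauchy--Schwarz extraction gives a \emph{set} $A'\subset\Int/N'\Int$ of density $\gg\alpha^2$ on which $a_1\gg\alpha/N'$, and Varnavides plus Bourgain's theorem \cite{Bo3} is then applied to the set $A'$, not to a bounded function $g$. No pointwise bound on $a_1$, and no dense-model theorem in your sense, is ever used. The loss from density $\alpha$ to $\alpha^2$ in the extraction is why the exponent $\alpha^{-3}$ (rather than $\delta^{-3/2}$) appears in the final balancing, but this trade is far cheaper than Green's construction of a bounded model, and combined with \cite{Bo3} it yields the stated $\log\log\log N/(\log\log N)^{1/3}$ threshold. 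To make your proposal correct you would need to either supply Green's bounded-model construction (and then track why you still beat his bound) or replace the $[0,1]$-valued $g$ with the $\ell_2$-norm estimate and the set-extraction lemma as the paper does.
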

In other words, we gain two logs over what was previously known. One of the
two logs gained is ultimately due to an
 enveloping use of a sieve; this idea is by now
familiar to the specialists, and, indeed, it will come into our proof via
a restriction theorem from \cite{GT} (based partially on work on sieves in 
\cite{Ra}). The other gain of a
log stems
from a more essential change in approach.

Our overall procedure is as follows.
The first step is to replace the characteristic function $a$ of
$A$ by a smoothed-out version $a_1$ whose Fourier transform is close to 
that $a$ (and thus, as can be easily shown, $a_1$ 
behaves like $a$ does when it comes
to the number of $3$-term progressions). This is much the same as in
\cite[\S 6]{Gr}; it is in accord with the general strategy (the ``uniformity
strategy'') described in \cite[\S 6]{Ta}. We then show that the $\ell_2$-norm
of $a_1$ is actually small enough that one can find a set $A'$
of large density in the integers such that $a_1$ is large on $A'$. 
This reduces the problem over the primes to the problem over the integers.

\subsection{Notation}
Let $N'$ be a positive integer. 
Let $f: \mathbb{Z}/N'\mathbb{Z} \rightarrow\Com$ be a function in 
$l^1(\Int/N' \Int)$. We define the Fourier transform of $f$ as the function 
$$\begin{array}{cccc}
\hat{f}:&\Int/N'\Int&\rightarrow& \Com\\
&b&\mapsto & \sum_{n\in\Int/N'\Int}f(n)e(-nb/N') ,
\end{array}$$
where we write $e(x)$ for $e^{2\pi i x}$.
 We write $\pi$ for the reduction map $\pi:\mathbb{Z} \to
\mathbb{Z}/N'\mathbb{Z}$. Given $x\in \mathbb{R}$, we define
$\{x\}$ to be the distance of $x$ to the nearest integer. 
We define $\|n\| = \{n/N'\}$; this works because $\{x\}$ depends only on 
$x \mo 1$.

Given a finite or countable set $S$, a function $f:S\to \mathbb{C}$
and a parameter $0<r<\infty$, we define the $\ell_r$-norm $|f|_r$ of $f$
by $|f|_r = \left(\sum_{x\in S} |f(x)|^r\right)^{1/r}$.
\subsection{Acknowledgements} 

Travel was funded by PIMS (Pacific Institute for the Mathematical Sciences)
and
EPSRC (Engineering and Physical Sciences Research Council).
H. A. Helfgott is supported in part by EPSRC grant EP-E054919/1;
A. de Roton is partially supported 
by PIMS and CNRS during the 2009--2010 calendar
year. Thanks are due to J. Bourgain, for his prompt reply to some questions,
to B. Green, for inviting both authors to
visit him at different times, and to I. Laba and T. Wooley, for their
constant help.

\section{From the primes to the integers}
\subsection{From the primes to the set $\{n: \text{$b + n M$ is prime}\}$}
Let us first show that we can focus on the intersection
of the primes with an arithmetic progression of large modulus, rather
than work on all the primes.

\begin{lem}\label{lem:ostrobo}
Let $\alpha, z$ be positive real numbers and $N$ be a large integer. We define $M=\prod_{p\leq z}p$. Let $A$ be a subset of the primes less than $N$ such that $|A|\geq \alpha N/\log{N}$. Then there exists some arithmetic progression $P(b)=\{b+nM: 1\leq n\leq N/M\}$ such that
$$|P(b)\cap A|\gg \alpha\frac{\log{z}}{\log{N}}\frac{N}{M}-\log{z},$$
where the implied constant is absolute. 
\end{lem}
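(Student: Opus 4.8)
The plan is to exploit a pigeonhole argument over the residue classes modulo $M$, combined with standard estimates for the count of primes in arithmetic progressions and for the size of $M = \prod_{p \le z} p$. First I would observe that every prime $q > z$ lies in one of the residue classes $b \bmod M$ with $\gcd(b,M) = 1$, and there are exactly $\phi(M) = \prod_{p \le z}(p-1)$ such classes. Since $A$ consists of primes less than $N$, at most $\pi(z) \le z$ of them can be $\le z$, so at least $|A| - z \ge \alpha N/\log N - z$ elements of $A$ are distributed among the $\phi(M)$ admissible progressions $P(b)$. By the pigeonhole principle there is some $b$ with $\gcd(b,M)=1$ for which
\[
|P(b) \cap A| \ge \frac{\alpha N/\log N - z}{\phi(M)}.
\]

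Next I would convert this into the stated bound. Write $N/M$ for the length of the progression $P(b)$, so the right-hand side above equals $\frac{M}{\phi(M)} \cdot \frac{N/M}{\log N}\bigl(\alpha - z\log N / N\bigr)$. The key inputs are then: (i) Mertens' theorem, which gives $\prod_{p \le z}(1 - 1/p)^{-1} = M/\phi(M) \gg \log z$; and (ii) the estimate $\log M = \sum_{p \le z} \log p = \vartheta(z) \asymp z$ (Chebyshev), which controls the error term $z \log N / N$ and more importantly lets one absorb a term of size $\ll \log z$ coming from the primes below $z$ that were discarded. Assembling these, the main term becomes $\gg \alpha \frac{\log z}{\log N} \cdot \frac{N}{M}$, while the discarded contribution and rounding in the pigeonhole step contribute an additive error that is $\ll \log z$ (using $\pi(z) \ll z/\log z$ to bound the number of small primes, or simply $\pi(z) \le z$ together with $z/\phi(M) \ll \log z \cdot z/M$, which is certainly $\ll \log z$ once $z$ is not too large relative to $N$). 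This yields exactly $|P(b)\cap A| \gg \alpha \frac{\log z}{\log N}\frac{N}{M} - \log z$.

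The main obstacle is bookkeeping the error terms so that the final expression has the clean form stated, rather than carrying around $\phi(M)$, $\vartheta(z)$ and $\pi(z)$ explicitly; in particular one must check that the loss from throwing away primes $\le z$ and from the pigeonhole rounding can genuinely be packaged into a single $-\log z$, uniformly in the relevant range of $z$ and $N$. This is routine but requires invoking Mertens and Chebyshev in the right direction (lower bound on $M/\phi(M)$, upper bound on $\log M$) and being slightly careful that the ``$\gg$'' in the main term and the ``$\ll$'' absorbed into $-\log z$ are compatible. No deep input is needed beyond these classical prime-counting estimates and the pigeonhole principle.
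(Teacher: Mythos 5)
Your approach is the same pigeonhole-over-residue-classes argument as the paper's, but there is a miscount in the set of discarded primes that makes one of your intermediate inequalities false (the final bound survives, essentially by luck of the constants).

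The issue: the progression $P(b) = \{b + nM : 1 \leq n \leq N/M\}$ begins at $b+M$, not at $b$. So a prime $q$ with $z < q \leq M$ is coprime to $M$ and therefore lies in the residue class of some admissible $b$, but it is \emph{not} an element of the set $P(b)$ --- it is too small. Your assertion that all but at most $\pi(z) \leq z$ elements of $A$ are ``distributed among the $\phi(M)$ admissible progressions $P(b)$'' is therefore wrong, and the pigeonhole inequality $|P(b) \cap A| \geq (\alpha N/\log N - z)/\phi(M)$ does not follow from it. Concretely, if $A$ consisted entirely of primes lying in $(z, M]$, then $|P(b) \cap A| = 0$ for every $b$, yet your inequality would give a positive lower bound whenever $|A| > z$.

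The correct accounting, as in the paper, is to note that
\[
\sum_{b : (b,M)=1} |A \cap P(b)| = |A| - |A \cap [1, M-1]| \geq \frac{\alpha N}{\log N} - M,
\]
that is, one must discard all primes in $[1, M-1]$ (at most $M$ of them), not merely those up to $z$. Since $M/\phi(M) \asymp \log z$ by Mertens, dividing by $\phi(M)$ turns this larger discard into an error term that is still $\ll \log z$, so the stated lemma is unharmed. But your version underestimates the discarded count by a factor of roughly $M/z$, and your estimate $z/\phi(M) \ll (z \log z)/M \ll \log z$, while correct, is not the bound that is actually needed --- the relevant one is $M/\phi(M) \ll \log z$.
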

\begin{proof}
If $(b,M)\ne 1$, the set $\{m\in P(b): \text{$m$ prime}\}$ is empty. 
Since the progressions $P(b)$ with $(b,M)=1$ are distinct, we have
$$\sum_{b:(b,M)=1}|A\cap P(b)|=|A|-|A\cap[1,M-1]|\geq \alpha \frac{N}{\log{N}}-M.$$
But $\left|\{b\leq M: (b,M)=1 \}\right|\sim {M}/{\log{z}}\sim e^z/\log{z}$. Therefore there exists some progression $P(b)$ such that 
\begin{equation*}
|A\cap P(b)|\gg
\left(\alpha\frac{N}{\log{N}}-M\right)\frac{\log{z}}{M}\gg\alpha\frac{\log{z}}{\log{N}}\frac{N}{M}
-\log{z}.
\end{equation*}
\end{proof}
The passage to an arithmetic progression $b+nM$ of large modulus is exactly
what Green and Tao \cite[p.\ 484]{GT2} call the ``$W$-trick'' (due to
Green's use of the letter $W$ for $M$ in \cite{Gr}). Green uses the fact
that such a passage removes all but the largest peaks in the Fourier transform
of the primes, whereas we simply use in a more direct way the fact that
the elements of $\{n:\text{$b+n M$ prime}\}$ are not forbidden from having 
small divisors. Of course, these are two
manifestations of the same idea.

Now, we fix $z=\frac{1}{3} \log N$, 
$M=\prod_{p\leq z} p$, and let $N'$ be the least prime larger
than $\left\lceil {2N}/{M}\right\rceil$.
(The requirement $N'> 
\left\lceil {2N}/{M}\right\rceil$
 will ensure that no new three-term arithmetic progressions
are created when we apply the reduction map $\pi:\mathbb{Z}\to
\mathbb{Z}/N'\mathbb{Z}$ to a set contained in $\lbrack 1, N/M\rbrack$.)
By Bertrand's postulate, $N' \ll {N}/{M}$.
Let $A$ be a subset of the primes less than $N$
such that $|A|\geq \alpha N/\log N$.
We assume $\alpha\geq (\log{N})N^{-1/2}$ (say) and obtain from 
Lemma \ref{lem:ostrobo} that there is an arithmetic progression $P(b)$
such that  $|P(b)\cap A|\gg {\alpha}({\log{z}}/{\log{N}})N'$. 
We define $A_0$ to be 
\begin{equation}\label{eq:deureq}
A_0 = \pi\left(\left\{n=\frac{m-b}{M}: m\in P(b)\cap A\right\}\right).
\end{equation}
This is a subset of 
$\pi(\{n\in [1,N']: \text{$b+nM$ is prime}\})$ 
satisfying \[|A_0|\gg \alpha \frac{\log{z}}{\log{N}} N'.\]
Our task is to show that there is a non-trivial three-term arithmetic progression
in $A_0\subset \mathbb{Z}/N'\mathbb{Z}$. It will follow immediately that
there is a non-trivial three-term arithmetic progression in $A\subset \mathbb{Z}$.

\subsection{From the set $\{n: \text{$b + n M$ is prime}\}$ to the
set of integers}

Let $a$ be the normalised characteristic
 function of $A_0$, i.e., $a=({\log{N}}/({N'\log{z}}))\mathbf{1}_{A_0}$. Fixing $\delta>0$ and $\varepsilon\in (0,1/4)$ to
be chosen later, define
$R:=\left\{x\in\Int/N'\Int : |\hat{a}(x)|\geq \delta\right\}\cup \{1 \}$ 
and the Bohr set
$$B:=\left\{n\in \mathbb{Z}/N'\mathbb{Z} : \forall x\in R, \|nx\|\leq\varepsilon \right\}.$$
We also define on $\mathbb{Z}/N'\mathbb{Z}$ the functions $\sigma=\frac{1}{|B|}\mathbf{1}_B$ and $a_1=a\ast\sigma$.  

To begin with, we remark that 
$|a|_1=({\log{N}}/({N'\log{z}}))|A_0|\gg \alpha$ and 
$|a_1|_1=|a|_1 |\sigma|_1= |a|_1$. Thus $|a_1|_1\gg \alpha$, i.e.,
$a_1$ is large in $\ell_1$-norm. We will later show that $a_1$ is small in
$\ell_2$-norm. These bounds on the $\ell_1$-norm and the $\ell_2$-norm will enable
us to find a large set of integers on which $a_1$ is $\gg \frac{1}{N}$. This
will enable us to reduce the problem for large subsets of the primes
to Roth's theorem for large subsets of the integers.

We first have to show that $a_1$ is ``close'' to $a$ in the sense that we care
about, namely -- we must show that $a_1$ is large on all three terms of
many three-term arithmetic progressions if and only if the same is true of $a$
(i.e., if and only if $A$ contains many three-term 
arithmetic progressions). More precisely,
our aim is to bound from above the quantity
\begin{equation}\label{eq:ameri}
\Delta = N'\cdot \left|\sum_{\text{$n_1$, $n_2$, $n_3$ in AP}}
a(n_1) a(n_2) a(n_3) - \sum_{\text{$n_1$, $n_2$, $n_3$ in AP}}
a_1(n_1) a_1(n_2) a_1(n_3)\right|\end{equation}
where the sums $\sum_{\text{$n_1$, $n_2$, $n_3$ in AP}}$ are over all
triples $(n_1,n_2,n_3)$ of elements of $\Int/N'\Int$
in arithmetic progression. Since 
$(n_1,n_2,n_3)$ is an arithmetic progression if and only if
$n_1 + n_3 = 2 n_2$,
\[\Delta = \left| \sum_m \hat{a}(-2 m) (\hat{a}(m))^2 -
\sum_m \hat{a}_1(-2 m) (\hat{a}_1(m))^2\right|, \]
as we can see simply by replacing all Fourier transforms by their definitions
and using the fact that $\sum_m e((n_1 + n_3 - 2 n_2) m/N') = 0$
when $n_1 + n_3 - 2 n_2 \ne 0$.

We will show that $\Delta$ is small, namely, $\Delta \ll \epsilon + \delta$.
First note that, since $a_1 = a\ast \sigma$ and so $\hat{a_1} = 
\hat{a} \hat{\sigma}$,
\[\Delta \leq \sum_m |\hat{a}(-2 m) - (\hat{a}(m))^2| |1 -
\hat{\sigma}(-2 m) (\hat{\sigma}(m))^2| .\]

For $x\in R$, since $\sigma$ is supported on $B$ and $\sum_{n}\sigma(n)=1$, 
we have 
\begin{align*}
|\hat\sigma(x)-1|&= \left|\sum_{n\in \Int/N'\Int}\sigma(n)e(nx/N')-1\right|\\
&= \left|\sum_{n\in \Int/N'\Int}\sigma(n)-1+\sum_{n\in \Int/N'\Int}\sigma(n)(
e(nx/N')-1)\right|\\
&\leq \sum_{n\in B}\sigma(n)|e(nx/N')-1| \ll\sum_{n\in B}\sigma(n)\|nx\|\ll\eps.
\end{align*}
Similarly, for $x\in R$,
\[|\hat\sigma(-2x)-1| \ll \sum_{n\in B} \sigma(n) \|-2 n x \| \ll 
\sum_{n\in B} \sigma(n) \eps = \eps\]
and so
\begin{equation}\label{eq:trenzas}|1 -
\hat\sigma(-2 x) \hat\sigma(x)^2| \ll \eps\end{equation}
for $x\in R$, i.e., when $|\hat{a}(x)|\geq \delta$.

Before we proceed further, we need to bound $\hat{a}$ in an average sense.
\begin{lem}\label{lem:dough}
For $p>2$,
 \begin{equation}\label{eq:jojo}
\sum_{m\in\Int/N'\Int}|\hat{a}(m)|^p \ll_p 1.\end{equation} 
\end{lem}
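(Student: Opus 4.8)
The plan is to deduce \eqref{eq:jojo} from the restriction theorem of Green and Tao for the Selberg enveloping sieve \cite{GT} (itself built on Ramar\'e's work \cite{Ra}); the only real content is to match the normalisation of $a$ to the normalisation in which that theorem is stated. Write $h=N'a$, so that $h$ equals $\tfrac{\log N}{\log z}$ on $A_0$ and $0$ elsewhere, and note that the paper's Fourier transform of $a$ is the mean-normalised Fourier transform of $h$, i.e. $\hat a(m)=\tfrac1{N'}\sum_{n}h(n)e(-nm/N')$. Since $A_0\subseteq\pi(\{n\in[1,N']:\ b+nM\text{ prime}\})$, we have $h\le\tfrac{\log N}{\log z}\,\mathbf 1_{\{b+nM\,\mathrm{prime}\}}$ pointwise. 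By Mertens, $M/\phi(M)=\prod_{p\le z}(1-1/p)^{-1}\asymp\log z$ and $\log M=\sum_{p\le z}\log p\asymp z=\tfrac13\log N$, so $N'\asymp N/M\asymp N^{2/3}$ and $\log(b+nM)\asymp\log N$ in the relevant range; hence $\tfrac{\log N}{\log z}\asymp\tfrac{\phi(M)}{M}\log(b+nM)$, and therefore $h\le C\,\Lambda_{b,M}$, where $\Lambda_{b,M}(n):=\tfrac{\phi(M)}{M}\Lambda(b+nM)$ is the $W$-tricked von Mangoldt function, which has mean $\asymp1$ on $\Z/N'\Z$.

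Next I would invoke the restriction theorem of \cite{GT} in the following form: there is an enveloping sieve $\beta\ge0$ on $\Z/N'\Z$, built from the residue $b$ modulo $M$ with sieve level $D=N^{1/2-o(1)}$, such that $\tfrac1{N'}\sum_n\beta(n)\asymp1$, such that $\beta(n)\asymp\tfrac{\log N}{\log z}$ whenever $b+nM$ is a prime exceeding $D$ (so that $\beta\gg\Lambda_{b,M}$ away from a set of $O(\sqrt{N'})$ points; this is because the survivors of the sieve in the progression $b\bmod M$ have density $\asymp\log z/\log D\asymp\log z/\log N$, so a mean-$1$ majorant supported on them is forced to be $\asymp\log N/\log z$ on primes), and such that, for every $p>2$ and every $g\colon\Z/N'\Z\to\C$ with $|g|\le\beta$,
\[\sum_{m\in\Z/N'\Z}\biggl|\,\frac1{N'}\sum_{n}g(n)\,e(-nm/N')\,\biggr|^{p}\ \ll_{p}\ 1.\]
If \cite{GT} is cited in the circle form $\int_0^1\bigl|\sum_{n\le N'}g(n)e(n\theta)\bigr|^p\,d\theta\ll_p(N')^{p-1}$, one passes to this cyclic form by sampling at the $N'$ points $m/N'$, which is lossless up to an absolute constant because $N'$ is prime and $g$ is supported on $[1,N']$. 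Now split $h=h_1+h_2$, with $h_2$ supported on the $O(\sqrt{N'})$ values of $n$ for which $b+nM$ is a prime $\le D$, and put $F_j(m)=\tfrac1{N'}\sum_n h_j(n)e(-nm/N')$, so that $\hat a=F_1+F_2$. On the support of $h_1$ one has $h_1\le C\beta$, so applying the restriction theorem to $g=h_1/C$ gives $\sum_m|F_1(m)|^p\ll_p1$; and $|F_2(m)|\le\tfrac1{N'}|h_2|_1\ll\tfrac{\log N}{\sqrt{N'}}$, so $\sum_m|F_2(m)|^p\ll(\log N)^p(N')^{1-p/2}=o(1)$ for fixed $p>2$. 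Hence $\sum_m|\hat a(m)|^p\ll_p\sum_m|F_1(m)|^p+\sum_m|F_2(m)|^p\ll_p1$, which is \eqref{eq:jojo}.

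The step needing the most care is the normalisation bookkeeping of the first paragraph, together with checking that the cited restriction theorem is genuinely uniform in the modulus $M$ over the range used here; but since $M\asymp N^{1/3}$ lies comfortably below the sieve level $D\asymp N^{1/2}$, the enveloping sieve for the progression $b\bmod M$ behaves exactly as in the classical case of bounded modulus, and this uniformity is part of the constructions of \cite{Ra,GT}. The remaining points are routine: the exceptional set handled by the splitting above contributes negligibly for every fixed $p>2$, and the fact that $A_0$ is in general a proper subset of $\{n:b+nM\text{ prime}\}$ only makes $h$ smaller, so the domination $h\le C\beta$ is if anything easier to verify.
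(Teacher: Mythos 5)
Your proposal uses exactly the paper's key tool --- the Green--Tao restriction estimate for the Selberg enveloping sieve \cite{GT}, applied to the progression $b\bmod M$ --- and the normalisation bookkeeping in your first paragraph is correct, so in spirit this is the paper's proof. The one concrete issue is the choice of sieve level $D=N^{1/2-o(1)}$. Since $N'\asymp N/M$ and $\log M\asymp\frac13\log N$, this makes $D\asymp(N')^{3/4-o(1)}$, which lies well above $(N')^{1/2}$ and hence outside the range in which \cite[Prop.\ 4.2]{GT} is stated (and outside the range in which Selberg-type sieves on an interval of length $N'$ are meaningful at all); the restriction inequality as you quote it, with implied constant depending only on $p$, is simply not available at this level. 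The fix is the paper's choice $R=(N')^{1/10}$, and with that choice your exceptional set $h_2$ becomes vacuous: every $n\in A_0$ has $b+nM\geq M>(N')^{1/10}\geq R$, so $A_0$ lies entirely in the sieve's survivor set $X_{R!}$ and the splitting $h=h_1+h_2$ is unnecessary. The detour via the $W$-tricked von Mangoldt function $\Lambda_{b,M}$ is likewise harmless but unneeded; the paper works directly with $b_n=a(n)/\beta(n)$ in the dual form
\[
\sum_{m}\Bigl|\,\frac{1}{N'}\sum_{n}b_n\beta(n)e(-mn/N')\,\Bigr|^{p}\ll_{p}\Bigl(\frac{1}{N'}\sum_{n}|b_n|^{2}\beta(n)\Bigr)^{p/2},
\]
which, combined with $\beta\gg\log N/\log z$ on $A_0$ and the normalisation $\sum_n a(n)\ll1$, gives the bound in one step; your ``majorant'' form $|g|\le\beta\Rightarrow\sum_m|\widehat g(m)/N'|^p\ll_p1$ is equivalent to this once one uses $\frac{1}{N'}\sum_n\beta(n)\asymp1$, as you note.
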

This is the same as \cite[Lemma 6.6]{Gr}; the only difference is
that our function $a$ was defined with a much larger modulus $M$
than in \cite{Gr}, and thus we must use a restriction theorem
for an upper-bound sieve, rather than a restriction theorem
for the primes (such as \cite[(4.39)]{Bo}).
\begin{proof}
Applying \cite[Prop.\ 4.2]{GT} with $F(n)=b+nM$ 
and $R=N'^{1/10}$, 
we obtain that,
for $p>2$ and any complex sequence $(b_n)_n$,
\begin{equation}\label{eq:chorra}
\sum_{m\in\Int/N'\Int}\left|\frac{1}{N'}\sum_{n=1}^{N'}b_n\beta(n)e(-mn/N')\right|^p\ll_p \left(\frac{1}{N'}\sum_{n=1}^{N'}|b_n|^2\beta(n)\right)^{p/2},
\end{equation}
where $\beta$ is an enveloping sieve function
with $R = N'^{1/10}$.
This means that, according to \cite[Prop.\ 3.1]{GT}, 
$\beta:\mathbb{Z}^+\to \mathbb{R}$ is
 a non-negative function satisfying the majorant property 
$$\beta(n)\gg \mathfrak{S}_F^{-1}\cdot \log{R}\cdot \mathbf{1}_{X_{R!}}(n)$$
with 
$$\mathfrak{S}_F=\prod_p\frac{\gamma(p)}{1-1/p},$$
$$\gamma(p)=\frac1p\left|\{n\in\Int/p\Int, (p,b+nM)=1 \}\right|=\begin{cases}(1-1/p) &\mbox{ if }p>z\\1&\mbox{ if }p\leq z\end{cases}$$ 
and
$$X_{R!}=\{n\in\Int: \forall d\leq R\;\; (b+nM,d)=1  \}.$$
In particular, for any integer $n\in A_0$, we have $n\in X_{R!}$ and 
$$\beta(n)\gg (\log{R})\prod_{p\leq z}(1-1/p)^{-1}\gg \frac{\log{N}}{\log{z}}.$$
We apply (\ref{eq:chorra}) to the sequence $(b_n)_n$ defined by
$$b_n=\begin{cases}\frac{1}{\beta(n)}a(n)&\mbox{ if }n\in A_0\\ 0 &\mbox{ otherwise}\end{cases}$$
and get
\begin{align*}
\sum_{m\in\Int/N'\Int}|\hat{a}(m)|^p&=(N')^p\sum_{m\in\Int/N'\Int}\left|\frac{1}{N'}\sum_{n\in\Int/N'\Int}a(n)e(-mn/N)\right|^{p}\\
&\ll_p (N')^{p/2}\left(\sum_{n\in\Int/N'\Int}\frac{1}{\beta(n)}a(n)^2\right)^{p/2}\\
&\ll_p (N')^{p/2}\left(\sum_{n\in\Int/N'\Int}\frac{\log{z}}{\log{N}}\left(\frac{\log{N}}{N'\log{z}}\right)a(n)\right)^{p/2}\\
&\ll_p \left(\sum_{n\in\Int/N'\Int}a(n)\right)^{p/2}\ll_p 1,
\end{align*}
since $a(n)$ was normalised so that $\sum_{n}a(n)\ll 1$.
\end{proof}

By H\"older's inequality and Lemma \ref{lem:dough}, we have
\[\sum_{m\in\Int/N'\Int}\left|\hat{a}(-2m)\hat{a}(m)^2\right|  \leq
\left(\sum_{m\in\Int/N'\Int} |\hat{a}(m)|^{5/2}\right)^{2/5}
\left(\sum_{m\in\Int/N'\Int} |\hat{a}(m)|^{10/3}\right)^{3/5}
\ll 1.\]

Hence, by \eqref{eq:trenzas},
\[\sum_{m: |\hat{a}(m)|\geq \delta}\left|\hat{a}(-2m)\hat{a}(m)^2\right|\left|1 - \hat{\sigma}(-2m)\hat{\sigma}(m)^2 \right|
\ll \eps .\]

On the other hand (again by H\"older, and again by Lemma \ref{lem:dough}),
\[\begin{aligned}
\sum_{m :|\hat{a}(m)|< \delta}&\left|\hat{a}(-2m)\hat{a}(m)^2\right|\left|1 - \hat{\sigma}(-2m)\hat{\sigma}(m)^2 \right|\leq
2\sum_{m: |\hat{a}(m)|< \delta}\left|\hat{a}(-2m)\hat{a}(m)^2\right|\\
&\leq 2\left(\sum_{m:|\hat{a}(m)|< \delta } |\hat{a}(m)|^{5/2}\right)^{2/5}
\left(\sum_{m:|\hat{a}(m)|< \delta} |\hat{a}(m)|^{10/3}\right)^{3/5}\\
&\leq 2\left(\sum_{m\in\Int/N'\Int} |\hat{a}(m)|^{5/2}\right)^{2/5}
\left(\delta^{5/3}\sum_{m\in\Int/N'\Int} |\hat{a}(m)|^{5/3}\right)^{3/5}
\ll \delta.\end{aligned}\]

Thus 
\begin{equation}\label{majDelta}
\Delta\ll(\eps+\delta).
\end{equation}

\subsection{An upper bound for the $\ell_2$-norm of $a_1$}\label{subs:qanpi}
Our aim in this subsection is to bound from above the $\ell_2$-norm of
of the function $a_1 = a \ast \sigma$. (This will later enable us to show
that $a_1$ is in some sense close to the characteristic function of a set of
large density in the integers.) We will prove that $|a_1|_2 \ll
 {1}/{\sqrt{N'}}$, where the implied constant is absolute.

Recall that we write $\pi$ for the reduction map $\pi:\mathbb{Z}\to
\mathbb{Z}/N'\mathbb{Z}$.
Given a function $f:\mathbb{Z}/N'\mathbb{Z}\to \mathbb{C}$, 
we can lift it to a function $\tilde{f}:\mathbb{Z}\to \mathbb{C}$
supported on the interval $\lbrack - (N'-1)/{2}, (N'-1)/{2}\rbrack$:
\[\tilde{f}(n) = \begin{cases} f(n \mo N') &\text{if 
$n\in \lbrack - \frac{N'-1}{2}, \frac{N'-1}{2}\rbrack$,}\\
0 &\text{otherwise.}\end{cases}\]
By the definition of $A_0$ and $a$, we see that $A_0\subset
\pi\left(\left\lbrack 1, (N'-1)/{2}\right\rbrack\right)$, and thus
 $a$ is supported on $\pi\left(\left\lbrack 1, (N'-1)/{2}
\right\rbrack\right)$. By the definition of
$R$, $B$ and $\sigma$ and the assumption
$\varepsilon<1/4$, we see that $\sigma$ is supported on 
$\pi\left(\left\lbrack -{N'}/{4}, {N'}/{4}
\right\rbrack\right)$. 
Thus $|a \ast \sigma|_2 = |\tilde{a}\ast \tilde{\sigma}|_2$.

By the definition of $a$, we have $0\leq \tilde{a}(n)\leq \lambda(n)$, 
where $\lambda:\mathbb{Z}\to \mathbb{R}$ is defined by 
\begin{equation}\label{eq:astron}\lambda(n)=\begin{cases}\frac{\log{N}}{N'\log{z}}&\mbox { if $1\leq n\leq N'$ and $b+nM$ is prime}\\0 &\mbox{ otherwise.} 
\end{cases}\end{equation}
Recall that $\sigma$ is non-negative, and thus $\tilde{\sigma}$ 
is non-negative. Hence
$|\tilde{a}\ast\tilde{\sigma}|_2\leq|\lambda\ast\tilde{\sigma}|_2$.
We conclude that
\[|a_1|_2 = |a\ast \sigma|_2 = 
|\tilde{a}\ast\tilde{\sigma}|_2\leq|\lambda\ast\tilde{\sigma}|_2 .\]
 It is thus our task to prove that
$|\lambda\ast \tilde{\sigma}|_2 \ll {1}/{\sqrt{N'}}$.

We proceed as follows:
\begin{equation}\label{eq:ostor}\begin{aligned}
 \sum_n\left|\tilde{\sigma}\ast\lambda(n)\right|^2
&=\sum_n\left|\sum_m\tilde{\sigma}(m)\lambda(n-m)\right|^2\\
&=\sum_{m_1}\sum_{m_2}\tilde{\sigma}(m_1)\tilde{\sigma}(m_2)
\sum_n\lambda(n+m_1)\lambda(n+m_2), \end{aligned}\end{equation}
where we recall that $\sigma(m) = \sigma(-m)$ (by the definition of $B$
and $\sigma$).
\begin{lem}\label{lem:cogot}
Let $\lambda$ be as in (\ref{eq:astron}).
Then, for any integers $m_1$, $m_2$,
\begin{equation}\label{sieveconj}
\sum_n\lambda(n+m_1)\lambda(n+m_2)\ll\left\{\begin{array}{ccc}\log{N}/(N'\log{z})&\mbox{ if }&m_1=m_2,\\ \frac1{N'}\prod_{p|(m_1-m_2),\; p>z}\frac{p}{p-1}&\mbox{ if }&m_1\not=m_2,\end{array}\right.
\end{equation}
where the implied constant is absolute.
\end{lem}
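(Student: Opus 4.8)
\emph{Proof strategy.} The plan is to split off the diagonal term $m_1=m_2$, which I bound by Brun--Titchmarsh, from the off-diagonal terms $m_1\ne m_2$, which require a two-dimensional upper-bound sieve. Since $0\le\lambda(n)\le \log N/(N'\log z)$ for every $n$, when $m_1=m_2$ we have $\sum_n\lambda(n+m_1)\lambda(n+m_2)=\sum_n\lambda(n+m_1)^2\le(\log N/(N'\log z))\,|\lambda|_1$, so it is enough to show $|\lambda|_1\ll 1$. Now $|\lambda|_1=(\log N/(N'\log z))\cdot\#\{1\le n\le N':b+nM\text{ prime}\}$, and the integers $b+nM$ with $1\le n\le N'$ lie in one residue class modulo $M$ inside an interval of length $\asymp N'M\asymp N$; as $(b,M)=1$, Brun--Titchmarsh gives $\#\{1\le n\le N':b+nM\text{ prime}\}\ll \frac{N}{M\log(N/M)}\prod_{p\le z}(1-1/p)^{-1}$. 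Throughout I use that $z=\tfrac13\log N$ forces $M=\prod_{p\le z}p=N^{1/3+o(1)}$, hence $\log(N/M)\asymp\log N'\asymp\log N$, together with $\prod_{p\le z}(1-1/p)^{-1}\asymp\log z$ (Mertens). These give $\#\{1\le n\le N':b+nM\text{ prime}\}\ll N'\log z/\log N$, so $|\lambda|_1\ll 1$ and the diagonal bound follows.

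Now suppose $m_1\ne m_2$. Replacing $n$ by $n-m_2$ shows that both the sum and the claimed bound depend on $m_1,m_2$ only through $d:=m_1-m_2\ne 0$, so I may assume $m_2=0$; the sum vanishes unless $|d|<N'$, and then the $n$ with $\lambda(n)\lambda(n-d)\ne 0$ fill an interval of length $Y:=N'-|d|$. If $Y<M$, the trivial bound $\sum_n\lambda(n)\lambda(n-d)\le(\log N/(N'\log z))^2\,Y<(\log N/(N'\log z))^2 M\ll 1/N'$ already suffices (as $M=N^{1/3+o(1)}$ while $N'\asymp N^{2/3+o(1)}$), so assume $Y\ge M$. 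Unwinding the definition of $\lambda$,
\[\sum_n\lambda(n)\lambda(n-d)=\Bigl(\frac{\log N}{N'\log z}\Bigr)^{2}\#\bigl\{n: Mn+b\text{ and }M(n-d)+b\text{ are both prime}\bigr\},\]
where $n$ runs over an interval of length $Y$ on which the two linear forms take positive values of size $\ll N=(N')^{3/2+o(1)}$. I would bound this cardinality by a standard upper-bound sieve (Selberg, Brun, or the fundamental lemma of sieve theory) applied to the product of the two forms with sieve level $D=Y^{1/10}$: since the forms exceed $D$, the number of $n$ for which both forms are prime is at most the number of $n$ for which their product has no prime factor $\le D$, which is $\ll Y\prod_{p\le D}(1-\omega(p)/p)$, where $\omega(p)$ is the number of residues $n\bmod p$ with $p\mid(Mn+b)(M(n-d)+b)$.

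Evaluating this product is the heart of the matter. If $p\le z$ then $p\mid M$, both forms are $\equiv b\pmod p$, and $(b,M)=1$ gives $\omega(p)=0$; so the primes up to $z$ do not sieve at all — the forms are automatically coprime to $M$ — and this is exactly what will supply the compensating factor $(\log z)^2$. If $p>z$ and $p\nmid d$, the forms are distinct and non-constant mod $p$, so $\omega(p)=2$; if $p>z$ and $p\mid d$, the forms coincide mod $p$, so $\omega(p)=1$. Hence $\prod_{p\le D}(1-\omega(p)/p)=\prod_{z<p\le D}(1-2/p)\cdot\prod_{z<p\le D,\,p\mid d}\tfrac{1-1/p}{1-2/p}$. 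By Mertens, $\prod_{z<p\le D}(1-2/p)=\prod_{z<p\le D}(1-1/p)^2\cdot\prod_{z<p\le D}\bigl(1-\tfrac1{(p-1)^2}\bigr)\asymp(\log z)^2/(\log D)^2$, since $\prod_{z<p\le D}(1-1/(p-1)^2)\asymp 1$; and since $\tfrac{1-1/p}{1-2/p}=\tfrac{p-1}{p-2}=\tfrac{p}{p-1}\bigl(1+\tfrac1{p(p-2)}\bigr)$ while $\prod_{p>z}(1+\tfrac1{p(p-2)})$ converges, we get $\prod_{z<p\le D,\,p\mid d}\tfrac{p-1}{p-2}\ll\prod_{p\mid d,\,p>z}\tfrac{p}{p-1}$. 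As $Y\ge M$ forces $\log D=\tfrac1{10}\log Y\gg\log N$, combining these estimates with $Y\le N'$ gives $\#\{\cdots\}\ll\frac{N'(\log z)^2}{(\log N)^2}\prod_{p\mid d,\,p>z}\frac{p}{p-1}$, and therefore
\[\sum_n\lambda(n)\lambda(n-d)\ll\Bigl(\frac{\log N}{N'\log z}\Bigr)^{2}\frac{N'(\log z)^2}{(\log N)^2}\prod_{p\mid d,\,p>z}\frac{p}{p-1}=\frac1{N'}\prod_{p\mid d,\,p>z}\frac{p}{p-1},\]
which is the desired bound; note that all implied constants are absolute, in particular uniform in $b$ and $d$.

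The step I expect to be hardest is this singular-series bookkeeping in the off-diagonal case: one has to use that every prime $\le z$ divides $M$, so that the two forms come pre-sieved and contribute the factor $(\log z)^2$ that cancels the square of the normalising constant $\log N/(N'\log z)$; control the correlation primes $p\mid d$ via the factorisation $\tfrac{p-1}{p-2}=\tfrac{p}{p-1}(1+\tfrac1{p(p-2)})$ and the convergence of the relevant tails; and keep the logarithms straight throughout, using $\log N'\asymp\log(N/M)\asymp\log N$. With those points in hand, the rest is a routine appeal to a standard sieve, to Mertens' theorem, and to Brun--Titchmarsh.
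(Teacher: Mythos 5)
Your proof is correct and follows essentially the same route as the paper's: Brun--Titchmarsh for the diagonal, and a two-dimensional upper-bound sieve for the off-diagonal with the same local densities ($\omega(p)=0$ for $p\le z$, $\omega(p)=1$ for $p>z$, $p\mid d$, $\omega(p)=2$ otherwise) supplying the compensating $(\log z)^2$. The only cosmetic difference is that you invoke the fundamental lemma and carry out the Mertens/singular-series bookkeeping by hand, whereas the paper cites Halberstam--Richert Thm.\ 5.7, which packages the same computation.
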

\begin{proof}
The case $m_1=m_2$ follows from Brun-Titchmarsh:
\begin{align*}
\sum_n\lambda^2(n+m)&=\left(\frac{\log{N}}{N'\log{z}}\right)^2\left|\{m\leq n\leq N'+m : b+(n-m) M\mbox{ is prime }\}\right|\\
&\ll \left(\frac{\log{N}}{N'\log{z}}\right)^2\frac{N'M}{\varphi(M)\log{N'}}\\
&\ll \left(\frac{\log{N}}{N'\log{z}}\right)^2\frac{N'}{\log{N'}}
\prod_{p|M}(1-1/p)^{-1}\\
&\ll \frac{\log{N}}{N'\log{z}}.
\end{align*}

 To obtain the case $m_1\ne m_2$, we will use a result based on Selberg's sieve.
(This is a familiar type of application of upper-bound sieves, similar to the
proof that the number of twin primes up to $N$ is at most a constant times its
conjectured value.) It is clear that
$\sum_n\lambda(n+m_1) \lambda(n+m_2)$ equals 
$\left({\log{N}}/(N'\log{z})\right)^2$ times
\begin{equation}\label{eq:hugue}
\left|\{1\leq n\leq N' : \text{$b+nM$ and $b+(n+m_2-m_1)M$ are primes}\}\right|.
\end{equation}
By \cite[Thm.\ 5.7]{HR}, 
\[(\ref{eq:hugue}) \ll \prod_p \left(1-\frac{\rho(p) -1}{p-1}\right)
\left(1 - \frac{1}{p}\right)^{-1} \frac{N'}{(\log N')^2} ,\]
where the implied constant is absolute. (We are implicitly using the fact
that $\log M\ll \log N'$, and thus the term in the third line of
\cite[(8.3)]{HR} is $= 1+o(1)$.) Here $\rho(p)$ is the number of solutions
$x\in \mathbb{Z}/p\mathbb{Z}$ to
\[(b + x M) (b + (x+ m_2 - m_1) M) \equiv 0 \mo p\]
for $p$ prime. It is easy to see that $\rho(p)=0$ if $p|M$ (i.e.,
if $p\leq z$), $\rho(p)=1$ if $p>z$ and $p|(m_2-m_1)$, and
$\rho(p)=2$ if $p>z$ and $p\nmid (m_2-m_1)$. Hence
\[\begin{aligned}
(\ref{eq:hugue}) &\ll \prod_{p\leq z} \left(1 - \frac{1}{p}\right)^{-2}
\mathop{\prod_{p>z}}_{p|m_1-m_2} \left(1 - \frac{1}{p}\right)^{-1}
\frac{N'}{(\log N')^2}\\
&\ll 
\mathop{\prod_{p>z}}_{p|m_1-m_2}\frac{p}{p-1}
\frac{N' (\log z)^2}{(\log N')^2}.
\end{aligned}\]
The statement follows.
\end{proof}

Let us now evaluate the last line of (\ref{eq:ostor}), with Lemma \ref{lem:cogot} in hand.
The contribution of the diagonal terms ($m_1=m_2$) in (\ref{eq:ostor})
is $\ll {\log N}/(|B|{N'\log{z}})$.
The contribution of the non-diagonal terms ($m_1\ne m_2$) is
\begin{equation}\label{eq:monst}\begin{aligned}
&\ll \frac1{N'} \mathop{\sum_{m_1} \sum_{m_2}}_{m_2\not=m_1}
\tilde{\sigma}(m_1)\tilde{\sigma}(m_2) 
\mathop{\prod_{p>z}}_{p|m_1-m_2} \frac{p}{p-1}.
\end{aligned}\end{equation}
Recall that $\tilde{\sigma}$ is supported on $\left\lbrack -{N'}/{4},
{N'}/{4}\right\rbrack$, and thus $|m_2-m_1|\leq {N'}/{2} < N'$ whenever
$\tilde{\sigma}(m_1) \tilde{\sigma}(m_2) \ne 0$. 

Now, a non-zero integer
$m$ with $|m|\leq N'$ 
cannot have more than ${\log N'}/{\log z}$ prime factors
$p>z$. Since $x\mapsto {x}/({x-1})$ is decreasing on $x$, this means that
\[\mathop{\prod_{p>z}}_{p|m} \frac{p}{p-1} \leq 
\left(\frac{z}{z-1}\right)^{(\log N')/(\log z)}.\]
Now $({z}/({z-1}))^{z} \ll 1$ (because $\lim_n (1+1/n)^n = e$) and
$$\frac{\log N'}{\log z} \ll \frac{\log N}{\log \log N} < \log N \ll z.$$
Hence
\[\mathop{\prod_{p>z}}_{p|m} \frac{p}{p-1} \ll 1\]
for any $m\ne 0$ with $|m|\leq N'$.
Thus
\[(\ref{eq:monst}) \ll \frac{1}{N'} 
\mathop{\sum_{m_1} \sum_{m_2}}_{m_2\not=m_1} \tilde{\sigma}(m_1)\tilde{\sigma}(m_2) 
\ll \frac{1}{N'}.\]

Putting everything together, we conclude that
\[ \sum_n\left|\tilde{\sigma}\ast\lambda(n)\right|^2 \ll
\frac{1}{N'} \left(\frac{\log N}{|B|\log{z}} + 1\right).\]
The right side is $\ll {1}/{N'}$ as long as $|B|\gg \log N/\log z$.

Now, as is well-known (see, e.g., \cite[Lem.\ 4.20]{TV}),
\[|B| \gg  \eps^r N',\]
where $r=|R|$. (The proof of this is a simple pigeonhole argument.)
 Since by \eqref{eq:jojo} we have
$\sum_m |\hat{a}(m)|^{5/2}  \ll 1$, we know that 
that the set of $x\in \mathbb{Z}/N' \mathbb{Z}$ with $|\hat{a}(x)|\geq \delta$
has at most $\ll \delta^{-5/2}$ elements. Thus, $r\ll \delta^{-5/2}$.

Hence all that we need for $|B| \geq \log N/\log z$ to hold is that
$\eps^{\delta^{-5/2}}\geq N^{-1/2}$ (say). In other words, we need
$|\log \eps| \cdot \delta^{-5/2} \leq \frac{1}{2} \log N$. We will recall
that we need to satisfy this condition at the end.

\subsection{Extracting a dense set from $a_1$}

We now have a function $a_1:\mathbb{Z}/N'\mathbb{Z}\to \mathbb{R}_0^+$ 
of $\ell_2$ norm $\ll {1}/{\sqrt{N'}}$. 
Its $\ell_1$ norm is $\gg \alpha$, where $\alpha$
is the density of our original set $A$ on the primes.
We must show that there is a large set on which $a_1$ is large.

\begin{lem}
Let $S$ be a set with $N'$ elements.
Let $a:S\vers\Real_0^+$ and $0<\alpha<1$ be such that
\begin{enumerate}
\item $\|a\|_1\geq \alpha$;
\item $\|a\|_2^2 \leq c/N'$.
\end{enumerate}
Then there exists a subset $A'$ of $S$ such that
\begin{enumerate}
\item $|A'| \geq \alpha^2N'/(4c)$;
\item $\forall n\in A'$, $a(n)\geq \alpha/2N'$.
\end{enumerate}
\end{lem}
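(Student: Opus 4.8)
The plan is to split the index set $S$ according to the size of $a(n)$, controlling the small part by the $\ell_1$ bound and the large part by the $\ell_2$ bound. Let $A' = \{n \in S : a(n) \geq \alpha/2N'\}$, which is exactly the set we want in conclusion (b). For conclusion (a) we must bound $|A'|$ from below.

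First I would observe that the contribution to $\|a\|_1$ coming from $n \notin A'$ is at most $N' \cdot (\alpha/2N') = \alpha/2$, since there are at most $N'$ such terms each of size less than $\alpha/2N'$. Combined with hypothesis (1), this gives $\sum_{n\in A'} a(n) \geq \alpha - \alpha/2 = \alpha/2$. Next I would apply Cauchy--Schwarz to this restricted sum: $\alpha/2 \leq \sum_{n\in A'} a(n) \leq |A'|^{1/2}\left(\sum_{n\in A'} a(n)^2\right)^{1/2} \leq |A'|^{1/2} \|a\|_2 \leq |A'|^{1/2}\sqrt{c/N'}$, using hypothesis (2). Squaring and rearranging yields $|A'| \geq (\alpha^2/4)/(c/N') = \alpha^2 N'/(4c)$, which is precisely conclusion (a).

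There is no real obstacle here; the only point requiring a moment's care is making sure the threshold $\alpha/2N'$ is chosen so that the $\ell_1$-mass lost on the complement of $A'$ is at most half of $\alpha$ (this uses $|S| = N'$), and then feeding the surviving half into Cauchy--Schwarz against the $\ell_2$ bound. Both inequalities are elementary and the constants match the statement exactly, so the proof is short. I would present it as: define $A'$, bound the complementary $\ell_1$-mass, deduce $\sum_{A'} a \geq \alpha/2$, apply Cauchy--Schwarz with $\|a\|_2^2 \leq c/N'$, and conclude.
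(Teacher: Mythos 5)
Your proof is correct and follows essentially the same route as the paper: define $A'$ by the threshold $\alpha/(2N')$, observe that the $\ell_1$-mass on the complement is at most $\alpha/2$, and apply Cauchy--Schwarz against the $\ell_2$ bound. The only (harmless) difference is that the paper keeps the term $\frac{\alpha}{2N'}(N'-|A'|)$ and solves a quadratic in $\sqrt{|A'|}$, whereas you bound it directly by $\alpha/2$; this is a slight simplification that yields the same constant $\alpha^2 N'/(4c)$.
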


\begin{proof}
If $A'=\{n : a(n)\geq \alpha/(2N')\}$, then
\begin{align*}
\alpha&\leq \sum_n a(n)\leq \frac{\alpha}{2N'}(N'-|A'|)+\sum_{n\in A'}a(n)\\
&\leq  \frac{\alpha}{2N'}(N'-|A'|)+\sqrt{|A'|}\sqrt{\frac{c}{N'}},
\end{align*}
by $\|a\|_2\leq c/N'$ and Cauchy's inequality.
In other words, $f(\sqrt{|A'|})\leq 0$, where
$f(x) = \frac{\alpha}{2 N'} \left(x^2 - 2 \frac{\sqrt{c N'}}{\alpha} x + N'\right)$. Completing the square, we see that $f(x)\leq 0$ implies
$x\geq \frac{\sqrt{c N'}}{\alpha} - \sqrt{\left(\frac{c}{\alpha^2}-1\right) N'}$.
Hence
\[|A'|\geq N' \cdot \left(\frac{ \sqrt{c}}{\alpha} - \sqrt{
\frac{c}{\alpha^2} - 1}\right)^2 \geq \frac{\alpha^2 N'}{4 c} .\]
\end{proof}

We apply Lemma 2 to $a_1$ with the bound $\|a_1\|_2^2\leq c/N'$ being provided 
by our work in \S \ref{subs:qanpi}.
We get a subset $A'$ of $\mathbb{Z}/N'\mathbb{Z}$ 
such that $|A'|\geq {\alpha^2 N'}/(4c)\gg\alpha^2 N'$ and 
$a_1(n)\geq {\alpha}/(2N')$ for every $n\in A'$.
Hence
\begin{equation}\label{eq:jura}
\sum_{m,d}a_1(m)a_1(m+d)a_1(m+2d)\geq \frac{\alpha^3 Z}{8N'^3},\end{equation}
where  $Z$ is the number of 3-term arithmetic progressions in $A'$.

\begin{lem}\label{lem:hono}
Let $A'\subset \mathbb{Z}/N'\mathbb{Z}$, where $N'$ is a prime. 
Assume $|A'|\geq \eta N'$, $\eta>0$.
 The number of 3-term arithmetic progressions in $A'$ is then at least
\[
 \frac{\eta N'^2}{
c_0 \exp\left(c_1 \eta^{-3/2} (\log(1/\eta))^3\right)},
\]
where $c_0$ and $c_1$ are absolute constants.
\end{lem}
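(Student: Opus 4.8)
The plan is to invoke the best available quantitative version of Roth's theorem in $\Int/N'\Int$, namely the Bourgain--Sanders type bound, and to translate it into the stated form. Recall that for a prime $N'$ and a set $A'\subset\Int/N'\Int$ with $|A'|\geq\eta N'$, the current state of the art (at the time of writing, Bourgain \cite{Bo3}, or the cleaner formulation via Sanders) guarantees a non-trivial three-term progression as soon as $\eta\gg(\log\log N')^2/(\log N')^{2/3}$, and in fact yields a \emph{count} of three-term progressions that is within a controlled factor of the random heuristic $\eta^3 N'^2$. The point of Lemma~\ref{lem:hono} is not merely existence but a lower bound on the number $Z$ of progressions, since \eqref{eq:jura} needs such a count; so I would use the ``counting'' form of Roth's theorem.

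The key steps, in order, are as follows. First, recall the standard density-increment or $U^2$-energy argument: writing $a'=\mathbf{1}_{A'}$ and $\Lambda_3(a',a',a')=\frac1{N'^2}\sum_{x,d}a'(x)a'(x+d)a'(x+2d)$, one has the identity $\Lambda_3(a',a',a')=\sum_m\widehat{a'}(-2m)\widehat{a'}(m)^2$ (normalising Fourier coefficients by $1/N'$), and the main term $\eta^3$ comes from $m=0$. If all nonzero coefficients satisfy $|\widehat{a'}(m)|\leq\eta^2/2$, then the error is bounded by $(\eta^2/2)\sum_m|\widehat{a'}(m)|^2=(\eta^2/2)\eta$ by Parseval, so $\Lambda_3\geq\eta^3/2$, giving $Z\geq\eta^3 N'^2/2$ minus the $O(\eta N')$ trivial (zero-difference) progressions, which is harmless for $N'$ large relative to $1/\eta$. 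Second, if instead some nonzero $m$ has $|\widehat{a'}(m)|>\eta^2/2$, one runs the usual density increment on a Bohr set (or on a subprogression, in the Heath--Brown--Szemer\'edi style), gaining a density increase that drives $\eta$ up geometrically; after $O(\log(1/\eta))$ steps either the argument terminates with the desired count or $\eta$ reaches a constant. Tracking the loss in $N'$ at each step — the passage to a sub-Bohr-set of relative size $\eps^{r}$ with $r\ll\delta^{-2}$ and $\eps,\delta\sim\eta^{O(1)}$ — produces exactly a bound of the shape $N'\mapsto N'^{1-\eta^{O(1)}}$ per step, hence after $O(\log(1/\eta))$ steps a requirement $N'\geq\exp(c_1\eta^{-3/2}(\log(1/\eta))^3)$ for the process to have room to run, which is where the stated exponent comes from. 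Reassembling, one gets $Z\gg\eta^3 N'^2$ provided $N'$ exceeds that threshold, and when $N'$ is below the threshold one simply notes that $\eta N'^2/(c_0\exp(c_1\eta^{-3/2}(\log(1/\eta))^3))\leq 1$, so the trivial progression $(x,x,x)$ for any $x\in A'$ already satisfies the bound — this is why the statement reads $\eta N'^2/(\dots)$ rather than $\eta^3 N'^2/(\dots)$, absorbing the $\eta^{-2}$ and constants into the exponential.

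Concretely I would structure the write-up as: (i) state the density-increment lemma with explicit bookkeeping of the size of the ambient group, citing \cite{Bo3} (and perhaps \cite{Bo2}) for the Bohr-set technology and noting that the iteration depth is $O(\eta^{-2})$ with each step costing a factor $\eta^{O(1)}$ in the exponent of $N'$ — or, more efficiently, $O(\eta^{-1})$ steps each halving the ``width'' in a way that compounds to the $\eta^{-3/2}$ figure; (ii) convert the existence conclusion into the counting conclusion via the Parseval argument above applied at the final scale; (iii) handle the small-$N'$ regime by the trivial-progression remark. Alternatively, and probably more cleanly, I would simply cite an off-the-shelf ``Varnavides-type'' corollary of Bourgain's bound: the number of three-term progressions in a set of density $\eta$ in $[N']$ is $\gg N'^2/\exp(\exp(C\eta^{-2}))$-type bounds are too weak, so one must use the sharper $\exp(C\eta^{-2+o(1)})$ or $\exp(C\eta^{-3/2}\cdots)$ forms; the precise constant $3/2$ in the exponent is exactly what \cite{Bo3} delivers. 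The main obstacle is purely bookkeeping: making the dependence on $\eta$ in the Bohr-set iteration explicit enough to land on $\eta^{-3/2}(\log(1/\eta))^3$ rather than something weaker, and confirming that the extra logarithmic powers from the number of iterations and from $|\log\eps|$ combine into the $(\log(1/\eta))^3$ factor. No genuinely new idea is needed — Lemma~\ref{lem:hono} is a black-box import of the best known Roth bound over $\Int/N'\Int$, restated in counting form with the trivial-case escape hatch — so the proof should be short, essentially a pointer to \cite{Bo3} plus the two-line Parseval deduction and the two-line small-$N'$ remark.
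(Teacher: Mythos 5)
Your main proposal (re-running the density-increment iteration while bookkeeping the ambient group size, then applying Parseval ``at the final scale'') is not what the paper does, and as written it has a gap: the Parseval argument at the final scale gives a count of three-term progressions inside the final small Bohr set, and turning that into a count inside the original $A'\subset\Int/N'\Int$ requires some averaging over the branches of the iteration --- precisely the kind of averaging that your sketch does not supply. You also never pin down the iteration count (you say $O(\log(1/\eta))$ in one place and $O(\eta^{-1})$ or $O(\eta^{-2})$ in another), so the claim that the losses ``combine into the $(\log(1/\eta))^3$ factor'' is not substantiated. In effect the main body of your proposal amounts to re-proving Bourgain's theorem in a stronger counting form, which is much more than the lemma needs.

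The ``alternative'' you mention at the end is the paper's actual argument, and it is both shorter and logically cleaner. The paper takes Bourgain's result from \cite{Bo3} purely as an \emph{existence} statement, inverts it to read: if $L\geq c_0\exp\bigl(c_1\eta^{-3/2}(\log(1/\eta))^3\bigr)$ then every subset of $\{1,\dotsc,L\}$ of density $\geq\eta/2$ contains a non-trivial 3AP, and then runs the classical Varnavides averaging over all arithmetic progressions $S_{a,d}=\{a+d,\dotsc,a+Ld\}$ in $\Int/N'\Int$. Each element of $A'$ lies in exactly $L$ such progressions of a given modulus $d\neq 0$, so at least $(\eta/2)N'(N'-1)$ of the $S_{a,d}$ meet $A'$ in density $\geq\eta/2$; each of those contributes at least one non-trivial 3AP, and each non-trivial 3AP is overcounted at most $L(L-1)$ times, giving $Z\geq\tfrac{\eta}{2}N'^2/L^2$. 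This explains the single factor of $\eta$ in the numerator directly (it is the fraction of good pairs $(a,d)$), rather than as ``$\eta^3$ with $\eta^{-2}$ absorbed into the exponential'' as you rationalise it. Your small-$N'$ remark (when $L>N'$ the bound is $\leq 1$ and a trivial progression suffices) is correct and matches the paper. So: you point at the right ingredients --- \cite{Bo3} plus a Varnavides-type averaging --- but you should abandon the density-increment bookkeeping entirely and actually carry out the Varnavides count, which is a half-page elementary argument.
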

\begin{proof}
We will proceed much as in \cite[Lem.\ 6.8]{Gr}; the basic argument goes
back to Varnavides \cite{Va}. 
Bourgain's best result on three-term arithmetic progressions in the integers
\cite[Thm.\ 1]{Bo3} states that, for given $L$ and $\eta \gg
{(\log \log L)^2}{(\log L)^{-2/3}}$, every subset of $\{1,2,\dotsc,L\}$
with $\geq \eta L$ elements contains at least one non-trivial three-term
arithmetic progression. This can be rephrased as follows: 
  there are constants $c_0$ and $c_1$ such that, if 
$L\geq c_0 \exp\left(c_1 \eta^{-3/2} (\log(1/\eta))^3\right)$, 
then any subset of $\{1,\cdots,L\}$ of density at least $\eta/2$ contains a non-trivial three-term arithmetic progression.
(Here we are simply expressing $L$ in terms of the density, rather than
the density in terms of $L$.)

It follows that, given an arithmetic progression $S_{a,d} = \{a+ d, a+2 d, a+3d,
\dotsc, a+Ld\}$ in $\mathbb{Z}/N'\mathbb{Z}$ ($a,d\in \mathbb{Z}/N'\mathbb{Z}$,
$d\ne 0$, $L\leq N'$) 
whose intersection with $A'$ has at least $({\eta}/{2}) L$
elements, there is at least one non-trivial three-term arithmetic progression
in $A'\cap S \subset \mathbb{Z}/N'\mathbb{Z}$. (Note that there is no
need for the progression $S$ to be the reduction mod $N'$ of a progression in
the integers $\{1,2,\dotsc,N'\}$; the argument works regardless of this.)
If we consider all arithmetic progressions of length $L$ and 
given modulus $d\ne 0$
in $\mathbb{Z}/N'\mathbb{Z}$,
we see that each element of $A'$ is contained in exactly $L$ of them.
Hence, $\sum_a |S_{a,d} \cap A'| = L |A'| \geq \eta N' L$, and so
(for $d\ne 0$ fixed)
$|S_{a,d}\cap A'| \geq ({\eta}/{2}) L$ for at least $({\eta}/{2}) N'$
values of $a$. Varying $d$, we get that $|S_{a,d}\cap A'|\geq ({\eta}/{2})
L$ for at least $({\eta}/{2}) N' (N'-1)$ arithmetic progressions $S_{a,d}$.
By the above, each such intersection $S_{a,d}\cap A'$ contains at least
one non-trivial three-term arithmetic progression.

Each non-trivial three-term arithmetic progression $a_1, a_2, a_3$
 in $\mathbb{Z}/N'\mathbb{Z}$
can be contained in at most $L (L-1)$ arithmetic progressions
$\{a+d,a+2d,\dotsc,a+Ld\}$ of length $L$ (the indices of $a_1$ and $a_2$ in the 
progression of length $L$ determine the progression). Hence, when we count
the three-term arithmetic progressions coming from the intersections 
$S_{a,d}\cap A'$, we are counting each such progression at most $L (L-1)$ times.
Thus we have shown that $A'$ contains at least
\[\frac{\eta}{2} \frac{N' (N'-1)}{L (L-1)}\geq 
\frac{\eta}{2} \frac{N'^2}{L^2}\]
distinct non-trivial three-term arithmetic
progressions for \[L =
\left\lceil c_0 \exp\left(c_1 \eta^{-3/2} (\log(1/\delta))^3\right)
\right\rceil,\] 
provided that $L\leq N'$. If $L>N'$, the bound in the statement of the lemma
is trivially true (as there is always at least one trivial three-term arithmetic
progression in $A'$.)
\end{proof}

From (\ref{eq:jura}) and Lemma \ref{lem:hono}, we conclude that
\begin{equation}\label{eq:iturb}\begin{aligned}
\sum_{m,d}a_1(m)a_1(m+d)a_1(m+2d)&\geq 
\frac{\alpha^3}{8 N'^3} \frac{\alpha^2}{8 c} 
\frac{(N')^2}{c_0 \exp(c_1 \left(
\alpha^2/4 c)^{-3/2} (\log(4c/\alpha^2))^3\right)}\\
&\geq \frac{1}{N'} \frac{1}{c_2 \exp\left(
c_3 \alpha^{-3} (\log (1/\alpha))^3\right)} ,\end{aligned}\end{equation}
where $c_2, c_3>0$ are absolute constants.
\section{Conclusion}
Assume that $A$ contains no non-trivial three-term arithmetic progressions.
Then $A_0$ (defined in (\ref{eq:deureq}))
contains no non-trivial three term arithmetic progressions, and so
\[\sum_{m,d}a(m)a(m+d)a(m+2d)
= \sum_{m,d}a(m)^3\ll \left(\frac{\log{N}}{N'\log z}\right)^2.\]
We also have
\[\Delta=
N' \left|\sum_{m,d}a(m)a(m+d)a(m+2d) -
\sum_{m,d}a_1(m)a_1(m+d)_1(m+2d)\right| \ll (\eps + \delta),\]
by the definition (\ref{eq:ameri}) and (\ref{majDelta}).
Lastly, we have just shown that
\[
\sum_{m,d}a_1(m)a_1(m+d)a_1(m+2d)\geq
\frac{1}{N'} \frac{1}{c_2 \exp\left(
c_3 \alpha^{-3} (\log (1/\alpha))^3\right)} 
\]
(see (\ref{eq:iturb})). We conclude that
\begin{equation}\label{eq:fjul}
\frac{1}{c_2 \exp\left(
c_3 \alpha^{-3} (\log (1/\alpha))^3\right)} \ll \eps + \delta 
+ \frac{1}{N'} \left(\frac{\log N}{\log z}\right)^2 .\end{equation}

Recall that $z = ( \log N)/3$.
There are constants $c_4$, $c_5$ such that, for
$$\delta= \eps =  \frac{1}{c_4} 
\exp\left(-c_5\alpha^{-3} \log^3(1/\alpha)\right),$$ we get a contradiction with (\ref{eq:fjul}),
provided that $N$ is larger than an absolute constant and
 $\alpha\geq (\log{N})^{-1/4}$, say. These values of $\delta$ and $\eps$ 
satisfy
$|\log \eps| \delta^{-2.5} \leq ( \log N)/2$ as long as
$$\left(\log(c_4) + c_5 \alpha^{-3} \log^3(1/\alpha)\right)\cdot
c_4^{2.5} 
\exp\left(2.5 c_5\alpha^{-3} \log^3(1/\alpha)\right) \leq \frac{1}{2} \log N.$$ 
Therefore we have a contradiction if $\alpha\geq C
{\log\log\log{N}}\left(\log\log{N}\right)^{-1/3}$, where $C$ is a large enough constant and $N$ is larger than an absolute constant.
Theorem \ref{thm:main} is thereby proven.

\end{document}